%
%
%
%
\documentclass{amsart}
\usepackage{amsmath}
\usepackage{amssymb}

\newtheorem{theorem}{Theorem}[section]
\newtheorem{lemma}[theorem]{Lemma}

\newtheorem{corollary}[theorem]{Corollary}
\theoremstyle{definition}

\newtheorem{conjecture}[theorem]{Conjecture}

\theoremstyle{remark}
\newtheorem{remark}[theorem]{Remark}

\numberwithin{equation}{section}

\newcommand{\mdeg}{\mathrm{mdeg}}
\newcommand{\Aut}{\mathrm{Aut}}
\newcommand{\Tame}{\mathrm{Tame}}


\begin{document}

\title{Tame automorphisms with multidegrees in the form of arithmetic progressions}

\author{Jiantao Li}
\address{School of Mathematics, Jilin university, 130012, Changchun, China} \email{jtlimath@gmail.com}

\author{Xiankun Du}
\address{School of Mathematics, Jilin university, 130012, Changchun, China} \email{duxk@jlu.edu.cn}

\date{\today}
\subjclass[2010]{14R10} \keywords{multidegree, tame automorphism, elementary reduction, arithmetic progression.}
\thanks{This research was partially supported by
NSF of China (No.11071097, No.11101176) and ``211 Project" and ``985 Project" of Jilin University.}

\begin{abstract}

Let $(a,a+d,a+2d)$ be an arithmetic progression of positive integers. The following statements are proved:
\begin{enumerate}
\item If $a\mid 2d$, then $(a, a+d, a+2d)\in\mdeg(\Tame(\mathbb{C}^3))$.
\item If $a\nmid 2d$, then, except for arithmetic progressions of the form $(4i,4i+ij,4i+2ij)$ with $i,j \in\mathbb{N}$ and $j$ is an odd number, $(a, a+d, a+2d)\notin\mdeg(\Tame(\mathbb{C}^3))$. We also related the exceptional unknown case to a conjecture of Jie-tai Yu, which concerns with the lower bound of the degree of the Poisson bracket of two polynomials.
\end{enumerate}
\end{abstract}

\maketitle


\section{Introduction}

Throughout this paper, let $F=(F_1,\dots,F_n): \mathbb{C}^n\rightarrow \mathbb{C}^n$ be a polynomial map on $\mathbb{C}^n$, where $\mathbb{C}$ denotes the complex field. Denote by $\mdeg F:=(\deg F_1,\ldots,\deg F_n)$ the {\it multidegree} of $F$. Denote by $\Aut (\mathbb{C}^n)$ the group of all polynomial automorphisms of $\mathbb{C}^n$ and by $\mdeg$ the mapping from the set of all polynomial maps into the set $\mathbb{N}^n$, here and throughout, $\mathbb{N}$ denotes the set of all positive integers.

A polynomial automorphism $F=(F_1,\ldots,F_n)$ of $\mathbb{C}^n$ is called {\it elementary} if $$F=(x_1,\ldots,x_{i-1}, \alpha x_i+f(x_1,\ldots,x_{i-1},x_{i+1},\ldots,x_n), x_{i+1},\ldots,x_n)$$
for $\alpha\in \mathbb{C}^*$. Denote by $\Tame (\mathbb{C}^n)$ the subgroup of $\Aut (\mathbb{C}^n)$ that is generated
by all elementary automorphisms. The elements of $\Tame (\mathbb{C}^n)$ are called {\it tame automorphisms}.  The classical Jung-van der Kulk theorem \cite{Jung, Kulk} showed that every polynomial automorphism of $\mathbb{C}^2$ is tame. For many years people believe that $\Aut (\mathbb{C}^n)$ is equal to $\Tame (\mathbb{C}^n)$. However, in 2004, Shestakov and Umirbaev \cite{SU1,SU2} proved the famous Nagata conjecture, that is, the Nagata automorphism on $\mathbb{C}^3$ is not tame.

The multidegree plays an important role in the description of polynomial automorphisms.  It follows from Jung-van der Kulk theorem that if $F=(F_1, F_2)\in\Aut (\mathbb{C}^2)$ then $\mdeg F=(\deg F_1, \deg F_2)$ is principal, that is, either $\deg F_1 | \deg F_2$ or $\deg F_2 | \deg F_1$. And the famous Jacobian conjecture is equivalent to the assert that if $(F_1, F_2)$ is a polynomial map satisfying the Jacobian condition, then $\mdeg F=(\deg F_1, \deg F_2)$ is principal \cite{Abhyankar}.
But it is difficult to describe the multidegrees of polynomial maps in higher dimensions, even in dimension three. Recently, Kara\'{s} presented a series of papers concerned with multidegrees of tame automorphisms in dimension three. It is shown in \cite{K1,K5}  that there is no tame automorphism of $\mathbb{C}^3$ with multidegree $(3,4,5)$ and $(4,5,6)$. In \cite{K2}, it is proved that $(p_1,p_2,d_3)\in \mdeg(\Tame(\mathbb{C}^3))$ if and only if $d_3\in p_1\mathbb{N}+p_2\mathbb{N}$, where $2<p_1<p_2$ are prime numbers. In \cite{K3,K4}, similar conclusions are given: $(3,d_2,d_3)\in \mdeg(\Tame(\mathbb{C}^3))$ if and only if $3 | d_2$ or $d_3\in 3\mathbb{N}+d_2\mathbb{N}$; $(d_1,d_2,d_3)\in \mdeg(\Tame(\mathbb{C}^3))$ if and only if $d_3\in d_1\mathbb{N}+d_2\mathbb{N}$, where $d_1, d_2$ are coprime odd numbers.

Let $(a,a+d,a+2d)$ be an arithmetic progression of positive integers. The following statements are proved in this paper.
\begin{enumerate}
\item If $a\mid 2d$, then $(a, a+d, a+2d)\in\mdeg(\Tame(\mathbb{C}^3))$.
\item If $a\nmid 2d$, then $(a, a+d, a+2d)\notin\mdeg(\Tame(\mathbb{C}^3))$ with the exceptional unknown case of the form $(4i,4i+ij,4i+2ij)$ with $i,j \in\mathbb{N}$ and $j$ is an odd number. We also related this unknown case to a conjecture of Jie-tai Yu, which concerns with the lower bound of the degree of the Poisson bracket of two polynomials.
\end{enumerate}

\section{Preliminaries}

Recall that a pair $f, g \in \mathbb{C}[x_1,\ldots,x_n]$ is called $*$-{\it reduced} in \cite{SU1,SU2} if
\begin{enumerate}
\item $f, g$ are algebraically independent;
\item $\bar{f}, \bar{g}$ are algebraically dependent, where $\bar{f}$ denotes the highest homogeneous component of $f$;
\item $\bar{f}\notin \langle \bar{g} \rangle$ and $\bar{g}\notin \langle \bar{f} \rangle$.
\end{enumerate}

The following inequality plays an important role in the proof of the Nagata conjecture in \cite{SU1,SU2} and is also essential in our proofs.

\begin{theorem}\emph{(\cite[Theorem 3]{SU1})}.\label{inequality}
Let $f, g \in \mathbb{C}[x_1,\ldots,x_n]$ be a $*$-reduced pair, and $G(x,y)\in \mathbb{C}[x,y]$ with $\deg_y G(x,y)=pq+r,\ 0\leq r<p$, where $p=\frac{\deg f}{\gcd(\deg f, \deg g)}$. Then
$$\deg G(f,g)\geq q(p\deg g-\deg f-\deg g+\deg [f,g])+r \deg g.$$
\end{theorem}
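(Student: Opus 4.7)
The plan is to follow the inductive degree-reduction strategy that underlies the entire Shestakov-Umirbaev theory. Write $m=\deg f$, $n=\deg g$, and $\delta=\gcd(m,n)$, so that $p=m/\delta$ and $p':=n/\delta$ satisfy $pn=p'm$. Because the pair is $*$-reduced, $\bar f$ and $\bar g$ are algebraically dependent homogeneous polynomials, and a simple degree count shows that their minimal algebraic relation has the shape $\bar f^{\,p'}=\lambda\bar g^{\,p}$ for some $\lambda\in\mathbb{C}^{*}$. The first step is therefore to introduce the auxiliary polynomial $h:=f^{\,p'}-\lambda g^{\,p}$; by cancellation of leading forms, $\deg h<pn=p\deg g$, so $h$ plays the role of a ``correction term'' measuring the failure of the relation at the level of $f,g$ themselves.

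Next, I would pin down $\deg h$ via the Poisson bracket. Applying $[-,g]$ and using Leibniz gives $[h,g]=p'f^{\,p'-1}[f,g]$, and the standard inequality relating $\deg[u,v]$ to $\deg u+\deg v$ converts this into the key lower bound $\deg h\geq p\deg g-\deg f-\deg g+\deg[f,g]$. This is exactly the ``per-reduction cost'' appearing in the theorem statement, and the rest of the proof is really a bookkeeping device that shows how to spend it $q$ times while preserving a residual contribution of $r\deg g$.

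With this in hand, I would induct on $q$. The base case $q=0$ requires $\deg G(f,g)\geq r\deg g$ whenever $\deg_yG=r<p$. Here the $*$-reduced hypothesis enters essentially: if the bound failed, the terms of top $y$-degree in $G(f,g)$ would have to cancel against lower-order ones upon substitution, producing a non-trivial algebraic relation between $\bar f$ and $\bar g$ of $y$-degree strictly less than $p$, which contradicts the minimality of $p$ dictated by the leading-form relation $\bar f^{p'}=\lambda\bar g^p$. For the inductive step $q\geq 1$, I would rewrite each occurrence of $g^{p}$ in the top $y$-block using the identity $g^{p}=\lambda^{-1}(f^{p'}-h)$, thereby lowering the $y$-degree of $G$ by $p$ at the cost of introducing an $h$-contribution; the inductive hypothesis applied to the resulting $G'$ with $\deg_yG'=p(q-1)+r$, combined with the lower bound on $\deg h$ from the previous paragraph, yields the claimed inequality.

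The main obstacle will be controlling cancellations across the reduction steps. Each trade $g^{p}\rightsquigarrow \lambda^{-1}(f^{p'}-h)$ introduces new terms that could a priori cancel against pre-existing ones or against each other, in which case the degree might drop further than the stated bound permits, spoiling the induction. Showing that no such pathological cancellation occurs is precisely where the full force of the $*$-reduced hypothesis is needed, especially the conditions $\bar f\notin\langle\bar g\rangle$ and $\bar g\notin\langle\bar f\rangle$, which rule out the low-degree coincidences that would otherwise break the bookkeeping. This delicate leading-form analysis is the technical core of the original argument in \cite{SU1} and is the step I would expect to be the most involved to carry out rigorously.
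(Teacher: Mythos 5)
This statement is quoted by the paper from \cite[Theorem 3]{SU1} and is not proved in the paper at all, so there is no in-paper argument to compare against; your proposal has to be judged against the original Shestakov--Umirbaev proof. Your overall skeleton does match theirs: the reduction of the algebraic dependence of $\bar f,\bar g$ to a relation $\bar f^{\,p'}=\lambda\bar g^{\,p}$ (which, note, needs the standard lemma that algebraically dependent homogeneous polynomials are scalar multiples of powers of a common homogeneous polynomial --- a ``simple degree count'' alone does not give it), the auxiliary element $h=f^{\,p'}-\lambda g^{\,p}$, and the bracket computation $[h,g]=p'f^{\,p'-1}[f,g]$ giving $\deg h\geq p\deg g-\deg f-\deg g+\deg[f,g]$ are exactly the right ingredients, and your base case $q=0$ is sound (indeed, for $j<p$ the weights $i\deg f+j\deg g$ of the monomials $\bar f^{\,i}\bar g^{\,j}$ are pairwise distinct, so no cancellation of leading forms can occur at all).

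The genuine gap is the inductive step, and you have in effect conceded it rather than closed it. Substituting $g^{p}=\lambda^{-1}(f^{\,p'}-h)$ rewrites $G(f,g)$ as $G'(f,g)+E$, where $G'$ has $y$-degree $p(q-1)+r$ and $E$ collects the $h$-contributions. The inductive hypothesis gives a \emph{lower} bound on $\deg G'(f,g)$, but a lower bound on one summand says nothing about $\deg\bigl(G'(f,g)+E\bigr)$: the whole difficulty of the theorem is precisely that these two pieces may cancel, and moreover the desired conclusion is a lower bound that \emph{decreases} with $q$ (each step costs $\deg f+\deg g-\deg[f,g]$), so one must show the cancellation is controlled rather than absent. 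In \cite{SU1} this is handled not by the naive substitution you describe but by a leading-form analysis: the terms $a_{ij}f^{i}g^{j}$ are grouped by the weight $i\deg f+j\deg g$, the top slice is a homogeneous polynomial in $\bar f^{\,p'}$ and $\bar g^{\,p}$ from which the irreducible factor $x^{p'}-\lambda y^{p}$ can be extracted at most $q$ times, and each extraction is charged against the lower bound for $\deg h$; condition (3) of the $*$-reduced definition is what keeps the bookkeeping honest. Since this is, as you yourself say, ``the technical core of the original argument,'' a proof that stops short of it has not proved the theorem; as it stands your inductive step would also ``prove'' false strengthenings in which cancellation is simply assumed not to occur.
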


Note that $[f,g]$ means the Poisson bracket of $f$ and $g$:
$$[f,g]=\sum_{1\leq i<j\leq n}(\frac{\partial f}{\partial x_i}\frac{\partial g}{\partial x_j}-\frac{\partial f}{\partial x_j}\frac{\partial g}{\partial x_i})[x_i,x_j].$$
By definition $\deg [x_i,x_j]=2$ for $i\neq j$ and $\deg 0=-\infty$,
$$\deg [f,g]=\max_{1\leq i<j\leq n}\deg \{(\frac{\partial f}{\partial x_i}\frac{\partial g}{\partial x_j}-\frac{\partial f}{\partial x_j}\frac{\partial g}{\partial x_i})[x_i,x_j]\}.$$
It is shown in \cite{SU1} that $[f,g]=0$ if and only if $f,g$ are algebraically dependent. And if $f, g$ are algebraically independent, then
$$\deg [f,g]=2+\max_{1\leq i<j\leq n}\deg(\frac{\partial f}{\partial x_i}\frac{\partial g}{\partial x_j}-\frac{\partial f}{\partial x_j}\frac{\partial g}{\partial x_i})$$

\begin{remark}\label{inequality2}
It is easy to shown (see \cite{K1} for example) that Theorem \ref{inequality} is true even if $f,g$ just satisfy:
(1) $f, g$ are algebraically independent; (2) $\bar{f}\notin \langle \bar{g} \rangle$ and $\bar{g}\notin \langle \bar{f} \rangle$.
\end{remark}
\begin{theorem}{\rm (\cite[Theorem 2]{SU2})}.\label{theorem}
Let $F=(F_1,F_2,F_3)$ be a tame automorphism of $\mathbb{C}^3$. If $\deg F_1+\deg F_2+\deg F_3>3$, (that is, $F$ is not a linear automorphism), then $F$ admits either an elementary reduction or a reduction of types I-IV (see \cite[Definitions 1-4]{SU2}).
\end{theorem}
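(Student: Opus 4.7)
The statement is the central reduction theorem of \cite{SU2}, so any sketch essentially has to trace the Shestakov--Umirbaev strategy. The plan is to argue by tracking how the total degree $\deg F_1+\deg F_2+\deg F_3$ evolves along a shortest expression of $F$ as a composition of elementary automorphisms. Since $F\in\Tame(\mathbb{C}^3)$, write $F=E_k\circ\cdots\circ E_1$ with each $E_i$ elementary and choose the expression so that the length $k$ (and then the sum of total degrees of all partial products $G_j=E_j\circ\cdots\circ E_1\circ\mathrm{id}$) is minimal. If every $G_j\mapsto G_{j-1}$ is itself an elementary reduction, then in particular $F$ admits an elementary reduction and we are done. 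Otherwise there is some index at which applying $E_j^{-1}$ strictly increases the total degree of $G_j$, while later steps bring it back down below the total degree of $G_{j-1}$; this forms a \emph{peak} in the degree profile.

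Next I would zoom in on the triple at the peak and its neighbors. The failure of further elementary reductions, combined with the eventual drop in total degree, forces algebraic dependence between the leading homogeneous components of pairs of coordinates in the peak triple (otherwise the resulting degrees would add up, contradicting the observed drop). This is precisely where Theorem \ref{inequality} (sharpened via Remark \ref{inequality2}) enters: applied to a $*$-reduced-like pair among the peak coordinates, it gives a sharp lower bound on degrees of polynomial expressions in that pair, and the only way the peak can be resolved by subsequent elementary steps is if this bound is almost attained. Extracting all near-equality cases of Theorem \ref{inequality} in this setting produces a short list of configurations for the leading components $\bar F_i$, and one matches these configurations, up to permutation of the coordinates and rescaling, with the four prescribed patterns that define reductions of types I--IV in \cite[Definitions 1--4]{SU2}.

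The main obstacle, and the reason this theorem is genuinely hard, is the case analysis at the peak. One has to enumerate every way in which two leading components can be algebraically dependent while the Shestakov--Umirbaev inequality is attained as a near-equality, handle the three choices for which variable is updated by $E_j$, and rule out the (a priori plausible) configurations that do not match any of the four reduction types. Each surviving case has to be verified to actually yield a type I--IV reduction of $F$ rather than just of the peak triple, which requires propagating the dependency relation back to $F$ itself via the intervening elementary steps. My proposal is to reproduce this case division essentially as in \cite{SU2}: the only tools required are Theorem \ref{inequality}, the basic properties of $*$-reduced pairs, and careful bookkeeping of leading components, but no genuine shortcut seems available without new degree estimates beyond Theorem \ref{inequality}.
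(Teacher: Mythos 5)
This statement is not proved in the paper at all: it is \cite[Theorem 2]{SU2} quoted verbatim and used as a black box, so there is no in-paper argument to compare your proposal against. What you have written is a thumbnail of the original Shestakov--Umirbaev proof, and as such it points in the right general direction (minimal decomposition into elementary automorphisms, degree bookkeeping, the Poisson-bracket estimate of Theorem \ref{inequality} applied to pairs whose leading forms are algebraically dependent, and a terminal case analysis matching the surviving configurations to the four reduction types). But it is a roadmap rather than a proof: every genuinely hard step is deferred with phrases like ``reproduce this case division essentially as in \cite{SU2}.'' The gap is exactly the content of \cite{SU2}.

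Two more specific cautions. First, minimality of the length $k$ of the decomposition is not by itself the right induction quantity in \cite{SU2}; the argument there is organized around a refined notion of reduction of the automorphism itself (not of a chosen decomposition), and the ``peak'' picture you describe is really the shape of the Jung--van der Kulk proof in two variables, which famously does \emph{not} transfer directly to $\mathbb{C}^3$ --- that failure is precisely why the four extra reduction types exist. Second, the toolkit is larger than you state: besides Theorem \ref{inequality} one needs the structural results on two-generated subalgebras from \cite{SU1} (degrees of elements of $\mathbb{C}[f,g]$, lower estimates for $\deg[f,g]$ in the $*$-reduced setting) and several auxiliary lemmas about how leading components interact under an elementary step. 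None of this affects the present paper, which only ever uses the theorem through Lemma \ref{LemOnReductionIII} (via \cite[Theorem 27]{K6}) and so is entitled to cite it; but your proposal should be read as a summary of where the proof lives, not as a proof.
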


Recall that we say a polynomial
automorphism $F=(F_1,F_2,F_3)$ admits an {\it elementary reduction} if
there exists a polynomial $g\in \mathbb{C}[x,y]$ and a permutation
$\sigma$ of the set $\{1,2,3\}$ such that $\deg
(F_{\sigma(1)}-g(F_{\sigma(2)},F_{\sigma(3)}))<\deg F_{\sigma(1)}$.

\section{Main results}
Note that if $(F_1,F_2,F_3)$ with multidegree $(d_1,d_2,d_3)$ is a tame automorphism, then, after a permutation $\sigma$, $(F_{\sigma(1)},F_{\sigma(2)},F_{\sigma(3)})\in \Tame (\mathbb{C}^3)$. Thus, without loss of generality we can assume that $d_1\leq d_2\leq d_3$. Next, by \cite[Proposition 2.2]{K1} it follows that if $d_1 | d_2$ or $d_3$ is a linear combination of $d_1$ and $d_2$ with coefficients in $\mathbb{N}$, then $(d_1,d_2,d_3)\in \mdeg(\Tame (\mathbb{C}^3))$.

The task now is to show when does an arithmetic progression $(a, a+d, a+2d)$ belong to $\mdeg(\Tame(\mathbb{C}^3))$, where $a, d \in\mathbb{N}$.

\begin{lemma}\label{LemOnIn}
An arithmetic progression $(a, a+d, a+2d)$ satisfies $a\mid d$ or
$a+2d\in a\mathbb{N}+(a+d)\mathbb{N}$ if and only if $a\mid 2d$.
\end{lemma}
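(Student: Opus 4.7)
The plan is to establish the two directions by a short case analysis in elementary number theory. For the reverse direction, I would assume $a\mid 2d$ and split on whether $a\mid d$. If $a\mid d$, the first disjunct of the conclusion already holds. Otherwise, $a\mid 2d$ together with $a\nmid d$ forces $a$ to be even; writing $a=2a'$, the hypothesis says $a'\mid d$ while $a\nmid d$, so $d$ is an odd multiple of $a'$, say $d=(2k+1)a'$ with $k\geq 0$. A direct computation then gives
\[
  a+2d \;=\; 2a' + 2(2k+1)a' \;=\; 2(k+1)\,a,
\]
which displays $a+2d$ as an element of $a\mathbb{N}+(a+d)\mathbb{N}$ (with coefficient $0$ on $a+d$, in the sense in which this sumset notation is used in the surrounding literature).

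For the forward direction, the case $a\mid d$ trivially gives $a\mid 2d$. Otherwise, writing $a+2d = ma + n(a+d)$ and rearranging yields
\[
  (m+n-1)\,a \;=\; (2-n)\,d.
\]
A case split on $n$ then collapses the possibilities: when $n\geq 3$ the left side is positive while the right side is negative, a contradiction; when $n=2$ we obtain $(m+1)a=0$, impossible; when $n=1$ we get $d=ma$, so $a\mid d$ and hence $a\mid 2d$; and the boundary case $n=0$, if allowed, yields $2d=(m-1)a$ and again $a\mid 2d$ (with $m=1$ ruled out by $d>0$).

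The argument is essentially short arithmetic bookkeeping and I do not foresee a real obstacle; the one point to be careful about is the reading of the sumset $a\mathbb{N}+(a+d)\mathbb{N}$, since for the equivalence to be tight one must allow a zero coefficient on $a+d$ (as illustrated by $(a,d)=(4,2)$, where $a+2d=8=2a$ is \emph{not} a combination of $a$ and $a+d$ with both coefficients strictly positive). Once this convention is fixed, the lemma follows immediately from the two case analyses above.
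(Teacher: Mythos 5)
Your proof is correct and follows essentially the same route as the paper: the same rearrangement $(m+n-1)a=(2-n)d$ with a finite case split on the coefficient of $a+d$ for the forward direction, and exhibiting $a+2d$ as a positive multiple of $a$ when $a$ is even for the converse. Your observation that the sumset $a\mathbb{N}+(a+d)\mathbb{N}$ must permit a zero coefficient on $a+d$ is accurate and matches the paper's implicit usage (e.g.\ it concludes $a+2d=(m+1)a\in a\mathbb{N}$), despite the introduction declaring $\mathbb{N}$ to be the positive integers.
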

\begin{proof}
If $a+2d\in
a\mathbb{N}+(a+d)\mathbb{N}$, then there exist $i,j\in\mathbb{N}$
such that $a+2d=ai+(a+d)j$, whence $(i+j-1)a=(2-j)d$.
\begin{enumerate}
\item If $i+j>1$. Thus, $2-j>0$, $j=0$ or $1$. If $j=0$, then $2d=(i-1)a$. If $j=1$, then $d=ia$. In both case we get $a\mid 2d$.
\item Otherwise, if $i+j\leq1$, it is easy to see that $d=0$ and hence $a\mid 2d$. More precisely, if $i=j=0$, then $a=-2d$, whence $d=0$; if $i=0$, $j=1$ or $i=1$, $j=0$, it is trivial that $d=0$.
\end{enumerate}

Conversely, suppose that $a\mid 2d$. If $a$ is odd, then $a\mid d$. If $a$ is even, it follows from $a\mid 2d$ that $d=\frac a2 m$ for some $m\in\mathbb{N}$,
whence $a+2d=(m+1)a\in a\mathbb{N}$.
\end{proof}

\begin{lemma}\label{LemOnReductionIII}
To prove that $(a, a+d, a+2d)\notin\mdeg(\Tame(\mathbb{C}^3))$, it is enough to show that every polynomial automorphism $F$ with multidegree $(a, a+d, a+2d)$ does not admit any elementary reduction.
\end{lemma}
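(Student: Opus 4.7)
The plan is to apply the Shestakov--Umirbaev structure theorem (Theorem~\ref{theorem}) and reduce the statement to a finite case analysis ruling out reductions of types I--IV. I argue the contrapositive: assume that no polynomial automorphism with multidegree $(a,a+d,a+2d)$ admits an elementary reduction, and suppose for contradiction that some $F=(F_1,F_2,F_3)\in\Tame(\mathbb{C}^3)$ satisfies $\mdeg F=(a,a+d,a+2d)$ for some $a,d\in\mathbb{N}$. Then $\deg F_1+\deg F_2+\deg F_3=3(a+d)>3$, so $F$ is not linear and Theorem~\ref{theorem} forces $F$ to admit either an elementary reduction or a reduction of one of types I--IV. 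The first option is excluded by hypothesis, so $F$ must admit a reduction of some type I--IV; the task is to derive a contradiction from this.

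The remaining work is a case analysis through the definitions in \cite[Definitions 1--4]{SU2}. For each of the four types I would write down explicitly the numerical conditions it imposes on the triple $(\deg F_{\sigma(1)},\deg F_{\sigma(2)},\deg F_{\sigma(3)})$ after an appropriate permutation~$\sigma$. These conditions are quite rigid: each type demands small integer ratios between specific pairs of degrees (for example, one degree must be twice or a $3/2$--multiple of another) together with strict inequalities. For every pairing of a type with a permutation $\sigma$ of $\{1,2,3\}$, I would substitute $(d_1,d_2,d_3)=(a,a+d,a+2d)$ in the corresponding positions and exploit the defining arithmetic--progression identity $d_1+d_3=2d_2$, together with the ordering $a\leq a+d\leq a+2d$, to rule out the required pattern. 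Arguments of this flavour were carried out for specific small multidegrees in \cite{K1,K3,K4,K5}; the aim here is to perform them uniformly for every arithmetic progression.

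The principal obstacle I anticipate is the sheer number of sub-cases---four types, each with several admissible degree patterns to compare against six permutations of the multidegree. Types I and II should yield quickly, since the divisibility constraints they impose cannot be satisfied by consecutive members of an arithmetic progression except in degenerate situations. Types III and IV will be more delicate because their numerical conditions are comparatively flexible; here I expect to combine the arithmetic--progression identity with parity considerations on $a$ and~$d$ (which is natural in view of the paper's exceptional family $(4i,4i+ij,4i+2ij)$ with $j$ odd). Once all four types are ruled out, the contradiction established by Theorem~\ref{theorem} completes the proof, and the lemma follows.
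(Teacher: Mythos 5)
Your overall strategy---invoke the Shestakov--Umirbaev alternative (Theorem~\ref{theorem}) and then kill reductions of types I--IV by examining the numerical constraints they place on the multidegree---is not the route the paper takes, and as written it has a genuine gap: the entire case analysis that would constitute the proof is deferred (``I would write down\dots'', ``I would substitute\dots''). Nothing is actually verified, so the argument is a plan rather than a proof. More importantly, the plan itself is unlikely to go through for types I and II. The degree patterns required by those reductions (which involve a pair of degrees in ratio $2:3$ together with side conditions) are \emph{not} automatically incompatible with arithmetic progressions: for example $(4,5,6)$ and more generally $(4d,5d,6d)$ contain the pair $(4d,6d)$ in ratio $2:3$, and $(4,5,6)$ is precisely a case that required a separate, substantially harder argument in \cite{K5}. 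So a uniform, purely numerical exclusion of all four types for every arithmetic progression cannot succeed; you would be forced to redo the structural analysis of type I, II and IV reductions, which is essentially the content of \cite[Theorem 27]{K6}.

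That citation is exactly how the paper proceeds: \cite[Theorem 27]{K6} already shows that to rule out a multidegree one only needs to exclude elementary reductions \emph{and reductions of type III}; types I, II and IV are disposed of there once and for all by structural arguments, not degree arithmetic. With that in hand the paper's proof of the lemma reduces to a two-line computation: the type III conditions from \cite[Definition 3]{SU2} force either $a+d=2n$, $a+2d=3n$ with $n<a\leq\tfrac32 n$ (giving $a=d=n$, a contradiction) or $a=\tfrac32 n$, $a+d=2n$ with $\tfrac52 n<a+2d$ (giving $a+2d=\tfrac52 n$, a contradiction). I recommend you replace your case analysis of all four types with an appeal to \cite[Theorem 27]{K6} and carry out only the type III computation explicitly.
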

\begin{proof}
By \cite[Theorem 27]{K6}, to prove that there is no tame automorphism of $\mathbb{C}^3$ with multidegree $(d_1,d_2,d_3)$, it suffices to show that such a hypothetical automorphism admits neither an elementary reduction nor a reduction of type III.

If $F$ with multidegree $(a, a+d, a+2d)$ admits a reduction of type III, then by \cite[Definition 3]{SU2} there exists $n\in \mathbb{N}$ such that

\begin{equation*}
(1) \left\{ \begin{aligned}
         n<a&\leq\frac32n,\\
         a+d&=2n,\\
         a+2d&=3n,
 \end{aligned} \right.
 \text{\quad or (2)} \left\{ \begin{aligned}
         a&=\frac32n,\\
         a+d&=2n,\\
        5n/2<a+2d&\leq3n.
 \end{aligned} \right.
\end{equation*}
It follows from the last two equalities in (1) that $a=d=n$, which contradicts  $n<a\leq\frac32n$. It follows from the first two equalities in (2) that $a=\frac32n, d=\frac12n$, which contradicts  $5n/2<a+2d\leq3n$.
Therefore, $F$ admits no reduction of type III. Thus, to prove that $(a, a+d, a+2d)\notin\mdeg(\Tame(\mathbb{C}^3))$, it is enough to show that every polynomial automorphism $F$ with multidegree $(a, a+d, a+2d)$ does not admit any elementary reduction.
\end{proof}

We are now in a position to show our main results.

\begin{theorem}\label{main}
Let $(a,a+d,a+2d)$ be an arithmetic progression of positive integers.
\begin{enumerate}
\item If $a\mid 2d$, then $(a, a+d, a+2d)\in\mdeg(\Tame(\mathbb{C}^3))$.
\item If $a\nmid 2d$, then $(a, a+d, a+2d)\notin\mdeg(\Tame(\mathbb{C}^3))$, except for the case that $(4i,4i+ij,4i+2ij)$ with $i,j\in\mathbb{N}$ and $j$ is an odd number.
\end{enumerate}
\end{theorem}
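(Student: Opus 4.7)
The plan divides along the two parts of the statement. For Part (1), the argument is essentially immediate: Lemma \ref{LemOnIn} shows that $a \mid 2d$ is equivalent to $a \mid d$ or $a+2d \in a\mathbb{N} + (a+d)\mathbb{N}$, and the observation opening Section 3 (an application of \cite[Proposition 2.2]{K1}) gives, in either situation, an explicit composition of elementary automorphisms realizing $(a, a+d, a+2d)$ as the multidegree of a tame automorphism of $\mathbb{C}^3$.

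For Part (2), I first apply Lemma \ref{LemOnReductionIII} to reduce to showing that no hypothetical polynomial automorphism $F = (F_1, F_2, F_3)$ of multidegree $(a, a+d, a+2d)$ admits an elementary reduction, under the assumption that $a \nmid 2d$ and the triple lies outside the exceptional family. Reducing the smallest coordinate $F_1$ is trivially impossible, since every nonconstant $g(F_2, F_3)$ has degree at least $a + d > a$. The real work is to exclude reductions of $F_2$ against $g(F_1, F_3)$ and of $F_3$ against $g(F_1, F_2)$, and I treat each by splitting according to whether the leading forms of the two non-reduced coordinates are algebraically independent or dependent.

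In the algebraically independent sub-case, $\deg g(F_i, F_j)$ equals $\max\{s \deg F_i + t \deg F_j\}$ taken over the support of $g$, and a short diophantine check shows that under $a \nmid 2d$ the required target degree ($a+d$ when $F_2$ is reduced, $a+2d$ when $F_3$ is reduced) cannot be represented as such a non-negative integer combination, so the top-degree cancellation needed for a reduction cannot occur. In the algebraically dependent sub-case I invoke Theorem \ref{inequality} (using Remark \ref{inequality2} to bypass the $*$-reducedness hypothesis) with parameter $p = \deg F_i / \gcd(\deg F_i, \deg F_j)$ and $\deg_{F_j} g = pq + r$. The Shestakov-Umirbaev lower bound, combined with the trivial estimate $\deg [F_i, F_j] \geq 2$, restricts the relevant triples $(p, q, r)$ to a short list, and each survivor is then excluded either by a direct comparison of weighted degrees or by exploiting the leading-form proportionality forced by the algebraic dependence.

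The obstruction, and the source of the exceptional family in the statement, arises in the dependent sub-case of reducing $F_2$ when $p = a/\gcd(a, 2d) = 2$. A short calculation shows that $p = 2$ together with $a \nmid 2d$ is equivalent to $a = 4i$ and $d = ij$ with $j$ odd, recovering precisely $(4i, 4i + ij, 4i + 2ij)$. For such triples the Shestakov-Umirbaev bound in the critical sub-case $q = 1$, $r = 0$ reads only $\deg g(F_1, F_3) \geq 2d + \deg [F_1, F_3]$, which remains compatible with the target $a + d$ whenever $\deg [F_1, F_3] \leq a - d$. Ruling out this possibility requires a sharper lower bound on the degree of the Poisson bracket of $F_1$ and $F_3$ than the trivial $\deg [F_1, F_3] \geq 2$, and this is exactly the content of Jie-tai Yu's conjecture mentioned in the introduction; its unresolved status is the sole obstacle to removing this last family from the statement.
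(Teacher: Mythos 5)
Your overall strategy coincides with the paper's: part (1) via Lemma \ref{LemOnIn} and \cite[Proposition 2.2]{K1}, and part (2) by invoking Lemma \ref{LemOnReductionIII} and then excluding each of the three possible elementary reductions with the Shestakov--Umirbaev inequality, the exceptional family emerging exactly from the case $p=a/\gcd(a,a+2d)=2$ in the reduction of $F_2$. The one genuine gap is your dismissal of the reduction of $F_1$ as ``trivially impossible, since every nonconstant $g(F_2,F_3)$ has degree at least $a+d$.'' That claim is not trivial: when $\overline{F_2}$ and $\overline{F_3}$ are algebraically dependent, cancellation among the terms $\overline{F_2}^{\,s}\overline{F_3}^{\,t}$ can in principle push $\deg g(F_2,F_3)$ below $\min(\deg F_2,\deg F_3)$ (already $g(x,y)=x-y$ applied to $f=x_1+x_2^2$, $h=x_2^2$ gives degree $1<2$), and ruling this out is precisely what Theorem \ref{inequality} is for. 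The paper treats this as its Case 2: with $p=\bar a+\bar d\geq 4$ the inequality forces $q=r=0$, hence $g=g_1(F_2)$ and $a=\deg g_1(F_2)\in(a+d)\mathbb{N}$, a contradiction. Your own dependent-sub-case machinery covers this verbatim, so the repair is immediate, but as written the step is unjustified.

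Two smaller remarks. First, where you split into ``leading forms independent/dependent,'' the paper applies the inequality uniformly through Remark \ref{inequality2}, checking only that $\overline{F_i}\notin\langle\overline{F_j}\rangle$ (which it derives from the non-divisibility of the degrees); both routes work, and your independent sub-case is indeed settled by the diophantine check you describe. Second, in your account of the exceptional family, $q=1$, $r=0$ is not the only sub-case compatible with the trivial bound $\deg[F_1,F_3]\geq 2$: for instance with $(a,d)=(4i,i)$ and $i\geq 4$ the sub-case $q=2$, $r=0$ also survives, which is why Theorem \ref{4i} imposes the stronger hypothesis $\deg[F_1,F_3]>\deg F_1$ rather than $>a-d$. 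This does not affect the proof of the theorem as stated, since that family is excluded from the claim, but it should be corrected in the discussion of how Yu's conjecture would close the remaining case.
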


\begin{proof}
(1) If $a\mid 2d$, then $a\mid d$ or
$a+2d\in a\mathbb{N}+(a+d)\mathbb{N}$ by Lemma \ref{LemOnIn}, and hence $(a, a+d, a+2d)\in\mdeg(\Tame(\mathbb{C}^3))$ by \cite[Proposition 2.2]{K1}.

(2) Now suppose that $a\nmid 2d$. Let $F=(F_1,F_2,F_3)$ be a polynomial automorphism with multidegree $(a, a+d, a+2d)$. Then by Lemma \ref{LemOnReductionIII}, it suffices to show that $F$ admits no elementary reduction.
 By Lemma \ref{LemOnIn}, the condition $a\nmid 2d$ implies that $a\nmid d$ and $a+2d\notin a\mathbb{N}+(a+d)\mathbb{N}$.

 Set $\gcd(a,a+d)=\gcd(a+d,a+2d)=\gcd(a,d)=b$. Write $a=b\bar{a},
 d=b\bar{d}$. Then $\bar{d}\geq1$ and we can claim that $\bar{a}\geq 3$, since \begin{enumerate}
                                                         \item[(i)] if $a$ is an odd number, it follows from $a\nmid 2d$ that $b\neq a$, whence $\bar{a}=\frac ab\geq3$.
                                                         \item[(ii)] if $a$ is an even number, it follows from $a\nmid 2d$ that $b\neq a$ and $b\neq \frac a2$, whence $\bar{a}\geq3$.
                                                       \end{enumerate}
Now the proof proceeds into three cases.

Case 1: If $F$ admits an elementary reduction of the form
$(F_1,F_2,F_3-g(F_1,F_2))$ such that $\deg(F_3-g(F_1,F_2))<\deg F_3$, then $\deg F_3=\deg g(F_1,F_2)$. Since $\gcd(a,a+d)=b$, we have $p=\frac{\deg F_1}{\gcd(\deg F_1,\deg F_2)}=\bar{a}$. Set $\deg_y g(x,y)=\bar{a}q+r,\ 0\leq r<\bar{a}$. Since $(F_1,F_2,F_3)$ is a polynomial automorphism, it follows that $F_i, F_j(i,j=1,2,3)$ are algebraically independent, and hence $\deg[F_i,F_j]\geq 2$. Moreover, $\bar{F_i}\notin \langle \bar{F_j} \rangle$ since otherwise we have $\deg F_i | \deg F_j$ that contradicts to the fact that $a\nmid (a+d)$ and $a+2d\notin a\mathbb{N}+(a+d)\mathbb{N}$. By Remark \ref{inequality2},
\begin{align*}
a+2d&=\deg F_3=\deg g(F_1,F_2)\\
   &\geq q(\bar{a}(a+d)-a-(a+d)+\deg[F_1,F_2])+r(a+d)\\
   &\geq q(3(a+d)-a-(a+d)+2)+r(a+d)\\
   &=q(a+2d+2)+r(a+d).
\end{align*}
Thus, $q=0$ and $r\leq1$. Write $g(F_1,F_2)=g_1(F_1)+g_2(F_1)F_2$. Since $a\mathbb{N}\cap (a\mathbb{N}+(a+d))=\emptyset$, we have $a+2d=\deg F_3=\deg g(F_1,F_2)\in a\mathbb{N}$ or $a\mathbb{N}+(a+d)$, which contradicts $a+2d\notin a\mathbb{N}+(a+d)\mathbb{N}$.

Case 2: If $F$ admits an elementary reduction of the form
$(F_1-g(F_2,F_3),F_2,F_3)$, then $\deg F_1=\deg g(F_2,F_3)$. Since $\gcd(a+d,a+2d)=b$, we have $p=\bar{a}+\bar{d}$. Set $\deg_y g(x,y)=(\bar{a}+\bar{d})q+r,\ 0\leq r<\bar{a}+\bar{d}$. Then
\begin{align*}
a&=\deg F_1=\deg g(F_2,F_3)\\
   &\geq q((\bar{a}+\bar{d})(a+2d)-(a+d)-(a+2d)+\deg[F_2,F_3])+r(a+2d)\\
   &\geq q(4(a+2d)-(a+d)-(a+2d)+2)+r(a+2d)\\
   &=q(2a+5d+2)+r(a+2d).
\end{align*}
Thus, $q=r=0$. Write $g(F_2,F_3)=g_1(F_2)$. Then $a=\deg F_1=\deg g_1(F_2)\in (a+d)\mathbb{N}$, contrary to $a<a+d$.

Case 3: If $F$ admits an elementary reduction of the form
$(F_1,F_2-g(F_1,F_3),F_3)$, then $\deg F_2=\deg g(F_1,F_3)$.
\begin{enumerate}
\item[(i)]  If $a$ is an odd number, then $\gcd(a,a+2d)=\gcd(a,d)$ and $p=\bar{a}$. Set $\deg_y g(x,y)=\bar{a}q+r,\ 0\leq r<\bar{a}$. Then
     \begin{align*}
     a+d&=\deg F_2=\deg g(F_1,F_3)\\
        &\geq q(\bar{a}(a+2d)-a-(a+2d)+\deg[F_1,F_3])+r(a+2d)\\
        &\geq q(3(a+2d)-a-(a+2d)+2)+r(a+2d)\\
        &=q(a+4d+2)+r(a+2d).
    \end{align*}
Thus, $q=r=0$. Write $g(F_1,F_3)=g_1(F_1)$, which implies that $a+d=\deg F_2=\deg g(F_1,F_3)=\deg g_1(F_1)\in a\mathbb{N}$, contrary to $a\nmid a+d$.
\item[(ii)] Now let $a$ be an even number. If we additionally assume that $a\nmid 4d$, then $\gcd(a,a+2d)=\gcd(a,2d)=2\gcd(\frac a2,d)\neq a,\frac a2$, whence $p=\frac a{\gcd(a,a+2d)}\neq 1, 2$. Therefore $p\geq3$. Set $\deg_y g(x,y)=pq+r,\ 0\leq r<p$. Then
     \begin{align*}
     a+d&=\deg F_2=\deg g(F_1,F_3)\\
        &\geq q(p(a+2d)-a-(a+2d)+\deg[F_1,F_3])+r(a+2d)\\
        &\geq q(3(a+2d)-a-(a+2d)+2)+r(a+2d)\\
        &=q(a+4d+2)+r(a+2d).
    \end{align*}
Thus, $q=r=0$. A same contradiction follows as in (a).\\
Moreover, if $a$ is an even number and $4\nmid a$, then the condition $a\nmid 2d$ forces $a\nmid 4d$. More precisely, if $a$ is even with $4\nmid a$, then $a=2k$ for some odd number $k$. If $a\mid 4d$, then $k\mid 2d$ and hence $k\mid d$. Thus, $a\mid 2d$, a contradiction. Therefore, the only unknown case left is $4\mid a$ and $a\mid 4d$, that is, $(4i,4i+ij,4i+2ij)$ with $i,j \in\mathbb{N}$. Moreover, the condition $a\nmid 2d$ forces $j$ to be an odd number.
\end{enumerate}

Thus, except for the case that $(4i,4i+ij,4i+2ij)$ with $i,j\in\mathbb{N}$ and $j$ is an odd number, any polynomial automorphism $F$ with multidegree $(a,a+d,a+2d)$ admits no elementary reduction, and consequently, $(a, a+d, a+2d)\notin\mdeg(\Tame(\mathbb{C}^3))$.
\end{proof}

Now the only unknown case left is to show whether there is a tame automorphism with multidegree $(4i,4i+ij,4i+2ij)$($i,j\in\mathbb{N}$ and $j$ is an odd number). By Lemma \ref{LemOnReductionIII}, to show such an automorphism does not exist, we just need to show it admits no elementary reduction.

\begin{theorem}\label{4i}
Let $F=(F_1,F_2,F_3)$ be a polynomial automorphism with multidegree $(4i,4i+ij,4i+2ij)$ that $i,j\in\mathbb{N}$ and $j$ is an odd number. If $\deg[F_1,F_3]>\deg F_1$, then $F$ admits no elementary reductions.
\end{theorem}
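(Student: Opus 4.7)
The plan is to mimic the three-case analysis of elementary reductions used in the proof of Theorem \ref{main}(2), with $a=4i$, $d=ij$ ($j$ odd), and to observe that the added hypothesis $\deg[F_1,F_3]>\deg F_1$ is precisely what is needed to close the single gap left by that earlier argument. Since $j$ is odd, one checks that $\gcd(a,a+d)=\gcd(a+d,a+2d)=i$ and $\gcd(a,a+2d)=2i$, so the parameter $p$ appearing in Theorem \ref{inequality} takes the values $4$, $4+j$, and $2$ for reductions that eliminate $F_3$, $F_1$, and $F_2$ respectively.

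For Case~1, a reduction $(F_1,F_2,F_3-g(F_1,F_2))$, the inequality with $p=4$ and $\deg[F_1,F_2]\geq 2$ forces $\deg_y g\leq 1$, so $g(F_1,F_2)=g_1(F_1)+g_2(F_1)F_2$. Then $\deg g(F_1,F_2)$ lies in $a\mathbb{N}\cup(a\mathbb{N}+(a+d))$, and equating this to $a+2d$ would require either $a\mid 2d$ or $a\mid d$; both fail for $j$ odd since $4i\nmid 2ij$ and $4i\nmid ij$. For Case~2, a reduction $(F_1-g(F_2,F_3),F_2,F_3)$, the parameter $p=4+j\geq 5$ makes the inequality strong enough to force $\deg_y g=0$, whence $a\in(a+d)\mathbb{N}$, impossible. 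These two cases proceed verbatim from Theorem \ref{main}(2).

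The crux is Case~3, a reduction $(F_1,F_2-g(F_1,F_3),F_3)$. Setting $\deg_y g=2q+r$ with $0\leq r<2$, Theorem \ref{inequality} (via Remark \ref{inequality2}) gives
\[
a+d\;\geq\;q\bigl(2(a+2d)-a-(a+2d)+\deg[F_1,F_3]\bigr)+r(a+2d)\;=\;q\bigl(2d+\deg[F_1,F_3]\bigr)+r(a+2d).
\]
If one only knows $\deg[F_1,F_3]\geq 2$, the term $2d+\deg[F_1,F_3]\geq 2d+2$ need not exceed $a+d$, so $q=1$ cannot be ruled out; this is exactly the obstruction that leaves the family $(4i,4i+ij,4i+2ij)$ unresolved in Theorem \ref{main}. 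Under the hypothesis $\deg[F_1,F_3]>a$, however, $2d+\deg[F_1,F_3]>a+2d>a+d$, forcing $q=0$, and then $r=0$ follows immediately from $a+d<a+2d$. Hence $g(F_1,F_3)=g_1(F_1)$, which gives $a\mid a+d$, i.e., $a\mid d$; but $a=4i$ and $d=ij$ with $j$ odd contradict this.

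The main (and only real) obstacle is Case~3, where the Poisson-bracket hypothesis is indispensable; Cases~1 and~2 are routine adaptations of the corresponding cases in the proof of Theorem \ref{main}(2), and the whole argument is essentially a bookkeeping exercise built around the key inequality of Shestakov--Umirbaev.
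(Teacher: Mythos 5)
Your proposal is correct and follows essentially the same route as the paper's proof: the same three-case analysis via Theorem \ref{inequality}, the same values $p=4$, $p=4+j$, $p=2$, and the same use of the hypothesis $\deg[F_1,F_3]>\deg F_1$ to force $q=r=0$ in the third case. The arithmetic verifications (e.g., $4i\nmid 2ij$ and $4i\nmid ij$ for $j$ odd) match what the paper uses, so nothing further is needed.
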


\begin{proof}
Proceeding as in the proof of Theorem \ref{main}, the proof falls into three parts.

(1) If $F$ admits an elementary reduction of the form
$(F_1,F_2,F_3-g(F_1,F_2))$, then $\deg F_3=\deg g(F_1,F_2)$. Since $\gcd(4i,4i+ij)=i\gcd(4,j)=i$, we have $p=\frac{\deg F_1}{\gcd(\deg F_1,\deg F_2)}=4$. Set $\deg_y g(x,y)=4q+r,\ 0\leq r<4$. Then
\begin{align*}
 4i+2ij&=\deg F_3=\deg g(F_1,F_2)\\
   &\geq q(4(4i+ij)-4i-(4i+ij)+\deg[F_1,F_2])+r(4i+ij)\\
   &\geq q(8i+3ij+2)+r(4i+ij).
\end{align*}
Thus, $q=0$ and $r\leq1$. Write $g(F_1,F_2)=g_1(F_1)+g_2(F_1)F_2$. However, the conditions $4i\mathbb{N}\cap ((4i+ij)+4i\mathbb{N})=\emptyset$ and $4i+2ij\notin 4i\mathbb{N}+(4i+ij)\mathbb{N}$ imply that $\deg F_3=\deg g(F_1,F_2)$ is impossible.

(2) If $F$ admits an elementary reduction of the form
$(F_1-g(F_2,F_3),F_2,F_3)$, then $\deg F_1=\deg g(F_2,F_3)$. Since $\gcd(4i+ij,4i+2ij)=i\gcd(4,j)=i$, $p=4+j$. Set $\deg_y g(x,y)=(4+j)q+r,\ 0\leq r<4+j$. Then
\begin{align*}
  4i&=\deg F_1=\deg g(F_2,F_3)\\
   &\geq q((4+j)(4i+2ij)-(4i+ij)-(4i+2ij)+\deg[F_2,F_3])+r(4i+2ij)\\
   &\geq q(8i+9ij+2ij^2+2)+r(4i+2ij).
\end{align*}
Thus, $q=r=0$. Write $g(F_2,F_3)=g_1(F_2)$. Then $4i=\deg F_1=\deg F_2\in(4i+ij)\mathbb{N}$, a contradiction.

(3) If $F$ admits an elementary reduction of the form
$(F_1,F_2-g(F_1,F_3),F_3)$, then $\deg F_2=\deg g(F_1,F_3)$. Since $\gcd(4i,4i+2ij)=2i\gcd(2,j)=2i$, $p=2$. Set $\deg_y g(x,y)=2q+r,\ 0\leq r<2$. Then \begin{align*}
      4i+ij&=\deg F_2=\deg g(F_1,F_3)\\
        &\geq q(2(4i+2ij)-4i-(4i+2ij)+\deg[F_1,F_3])+r(4i+2ij)\\
        &>q(2ij+4i)+r(4i+2ij).
     \end{align*}
Note that the last inequality follows from $\deg[F_1,F_3]>\deg F_1$.
Thus, $q=r=0$. Write $g(F_1,F_3)=g_1(F_1)$. Then we get a contradiction to $4i\nmid 4i+ij$.

Consequently, $F$ admits no elementary reductions.
\end{proof}

The task now is to ask what is the lower bound of $\deg[F_1,F_3]$, particularly, if $\deg[F_1,F_3]>\deg F_1$ for all polynomials satisfying $\deg F_1=4i$ and $\deg F_3=4i+2ij$, $i,j\in\mathbb{N}$ and $j$ is an odd number, then we can give a complete description of whether $(a,a+d,a+2d)\in \mdeg(\Tame(\mathbb{C}^3))$.  This question is closely related to a conjecture of Jie-Tai Yu.
\begin{conjecture}\cite{Yu}\label{Yu}
Let $f$ and $g$ be algebraically independent polynomials in $k[x_1,\ldots,x_n]$ such that the homogeneous components of maximal degree of $f$ and $g$ are algebraically
dependent, $f$ and $g$ generate their integral closures $C(f)$ and $C(g)$ in $k[x_1,\ldots,x_n]$, respectively, and neither $\deg f | \deg g$ nor $\deg g | \deg f$, then
$$\deg[f,g]>\min\{\deg(f), \deg(g)\}.$$
\end{conjecture}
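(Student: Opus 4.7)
The plan is to mimic the three-case case-analysis used in the proof of Theorem \ref{main}, specialized to the triple $(4i, 4i+ij, 4i+2ij)$ with $j$ odd. For each of the three possible elementary reductions, I apply Theorem \ref{inequality} (using Remark \ref{inequality2}, whose hypotheses hold because $F$ is an automorphism and the divisibility obstructions $4i \nmid 4i+ij$ and $4i+2ij \notin 4i\mathbb{N} + (4i+ij)\mathbb{N}$ prevent $\bar F_i \in \langle \bar F_j \rangle$) and extract a contradiction.

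Cases 1 and 2 should be routine and do not require the Poisson-bracket hypothesis. For the reduction $(F_1, F_2, F_3 - g(F_1,F_2))$, one has $p = \deg F_1/\gcd(\deg F_1, \deg F_2) = 4i/i = 4$, and the bound $\deg[F_1,F_2]\geq 2$ already forces $q = 0$ and $r \leq 1$; then $\deg g(F_1,F_2)$ lies in $4i\mathbb{N} \cup (4i+ij + 4i\mathbb{N})$, neither of which contains $4i+2ij$. For $(F_1 - g(F_2,F_3), F_2, F_3)$, the relevant $p = 4 + j$ is even larger, so $q = r = 0$ is immediate; then $g = g_1(F_2)$ forces $4i \in (4i+ij)\mathbb{N}$, absurd.

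The content of the theorem lies in case 3, the reduction $(F_1, F_2 - g(F_1,F_3), F_3)$, where $\gcd(4i, 4i+2ij) = 2i\gcd(2,j) = 2i$ because $j$ is odd, so $p = 2$. With only the generic bound $\deg[F_1,F_3] \geq 2$, Theorem \ref{inequality} yields $\deg F_2 \geq q(2ij+2) + r(4i+2ij)$, which does not rule out $q = 1$ for moderate $i,j$; this is exactly why the conjectural strengthening $\deg[F_1,F_3] > \deg F_1$ is needed. Under the hypothesis, Theorem \ref{inequality} gives
\begin{align*}
4i+ij = \deg F_2 = \deg g(F_1,F_3) &\geq q\bigl(2(4i+2ij) - 4i - (4i+2ij) + \deg[F_1,F_3]\bigr) + r(4i+2ij) \\
&> q(2ij + 4i) + r(4i+2ij).
\end{align*}
Since $ij \geq 1$, the inequality $4i+ij > q(2ij+4i)$ fails for any $q \geq 1$, so $q = 0$; then $4i+ij > r(4i+2ij)$ forces $r = 0$. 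Hence $g(F_1,F_3) = g_1(F_1)$, which would make $4i+ij = \deg F_2 = \deg g_1(F_1) \in 4i\mathbb{N}$, i.e.\ $4 \mid j$, contradicting the oddness of $j$.

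Nothing in the argument is a serious obstacle beyond what is already handled in Theorem \ref{main}; the single subtlety is the tightness in case 3, where $p = 2$ makes the standard Shestakov--Umirbaev inequality just barely too weak and the extra unit of slack provided by the hypothesis $\deg[F_1,F_3] > \deg F_1$ is precisely what is needed to push $q$ down to $0$. This is also exactly the point at which Conjecture \ref{Yu} would, if settled in the required form, close the remaining case $(4i, 4i+ij, 4i+2ij)$ in Theorem \ref{main}.
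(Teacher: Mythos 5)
The statement you were asked to address is Conjecture \ref{Yu} itself: the degree estimate $\deg[f,g]>\min\{\deg f,\deg g\}$ for a general pair of algebraically independent polynomials $f,g$ whose leading forms are algebraically dependent, which generate their integral closures, and whose degrees do not divide one another. Your write-up does not prove this. What you have reproduced is, essentially verbatim, the proof of Theorem \ref{4i}: that an automorphism of multidegree $(4i,4i+ij,4i+2ij)$ admits no elementary reduction \emph{under the hypothesis} $\deg[F_1,F_3]>\deg F_1$. In that argument the conjectured inequality is an \emph{input} (it is exactly the ``extra unit of slack'' you invoke in case 3 to force $q=0$), not a conclusion. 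So the logical direction is reversed: you have shown that the conjecture, if true in the relevant special case, would close the remaining gap in Theorem \ref{main} --- which is precisely the point the paper makes after Theorem \ref{4i} --- but you have supplied no argument whatsoever for the conjecture itself, for any pair $f,g$.

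There is also a more basic obstruction: no correct proof of the statement as written can exist, because the paper itself notes that Conjecture \ref{Yu} has counterexamples in \cite{Yu}. It is stated as a conjecture of Drensky and Yu, is never proved in the paper, and is false in full generality; the open question is only whether some usable lower bound on $\deg[f,g]$ holds in the specific degree configuration $\deg f=4i$, $\deg g=4i+2ij$. Your case analysis (which is correct as a proof of Theorem \ref{4i}, matching the paper's cases 1--3 with $p=4$, $p=4+j$, and $p=2$ respectively) should be presented as such, not as a proof of the conjecture.
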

Although Conjecture \ref{Yu} has some counterexamples in \cite{Yu}, it is still of great interest to find a meaningful
lower bound of $\deg[f,g]$, and such a bound will give a nice description of $\Tame(\mathbb{C}^n)$ and $\Aut(\mathbb{C}^n)$. In particular, if Conjecture \ref{Yu} is valid for $f,g$ with $\deg f=4i$ and $\deg g=4i+2ij$, then we can claim that $(a, a+d, a+2d)\in\mdeg(\Tame(\mathbb{C}^3))$ if and only if $a\nmid 2d$.

\begin{corollary}
{\rm(1)} Let $(d_1,d_2,d_3)$ be a sequence of continuous integers, then
 $(d_1,d_2,d_3) \in \mdeg (\Tame(\mathbb{C}^3))$ if and only if $d_1\leq 2$.

{\rm(2)} Let $(d_1,d_2,d_3)$ be a sequence of continuous odd numbers, then
 $(d_1,d_2,d_3)\in\mdeg(\Tame(\mathbb{C}^3))$ if and only if $d_1=1$.

{\rm(3)} Let $(d_1,d_2,d_3)$ be a sequence of continuous odd numbers, then
 $(d_1,d_2,d_3)\in\mdeg(\Tame(\mathbb{C}^3))$ if $d_1\leq 4$, $(d_1,d_2,d_3)\notin\mdeg(\Tame(\mathbb{C}^3))$ if $d_1>4$ and $d_1\neq8$. The only unknown case left is $(8,10,12)$.
\end{corollary}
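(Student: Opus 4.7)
The plan is to derive each part by specializing Theorem \ref{main} to the relevant arithmetic progression and then matching the exceptional family $(4i, 4i+ij, 4i+2ij)$ with $j$ odd against the numerical constraints forced by a small common difference.

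For part (1), I would set $a = d_1$ and $d = 1$. The divisibility $a \mid 2d$ reduces to $a \mid 2$, equivalent to $d_1 \leq 2$, and Theorem \ref{main}(1) yields the tame direction. For $d_1 \geq 3$, Theorem \ref{main}(2) rules out tameness unless the triple has the form $(4i, 4i+ij, 4i+2ij)$ with $j$ odd; matching the common difference gives $ij = 1$, hence $i = j = 1$, leaving only $(4,5,6)$. I would then invoke Kara\'{s}'s result in \cite{K5} that $(4,5,6) \notin \mdeg(\Tame(\mathbb{C}^3))$ to close this last case.

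For part (2), set $a = d_1$ odd and $d = 2$. Then $a \mid 2d$ becomes $a \mid 4$, which with $a$ odd forces $a = 1$, giving the tame case $(1,3,5)$ by Theorem \ref{main}(1). For $d_1 \geq 3$ odd, the exceptional family requires $a = 4i$, incompatible with $a$ being odd, so Theorem \ref{main}(2) applies directly and rules out tameness.

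For part (3) I read ``continuous odd numbers'' as a typo for ``consecutive even numbers,'' since otherwise the bound $d_1 \leq 4$ together with the exceptional triple $(8,10,12)$ is inconsistent. Setting $a = d_1$ even and $d = 2$, the condition $a \mid 4$ yields $a \in \{2,4\}$, i.e., $d_1 \leq 4$, and both $(2,4,6)$ and $(4,6,8)$ are tame by Theorem \ref{main}(1). For even $a \geq 6$, membership in the exceptional family with common difference $ij = 2$ and $j$ odd forces $(i,j) = (2,1)$, so $a = 8$, yielding the single unresolved case $(8,10,12)$; every other even $a \geq 6$ is ruled out by Theorem \ref{main}(2). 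There is no real obstacle beyond these numerical checks and the use of the prior result on $(4,5,6)$; the genuinely open input is the status of $(8,10,12)$, which the corollary itself records as unknown.
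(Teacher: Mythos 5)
Your proposal is correct and follows essentially the same route as the paper: both directions of each part come from Lemma \ref{LemOnIn} together with Theorem \ref{main}, the exceptional family $(4i,4i+ij,4i+2ij)$ is matched against the common difference to isolate $(4,5,6)$ in part (1) and $(8,10,12)$ in part (3), and Kara\'{s}'s result from \cite{K5} disposes of $(4,5,6)$. Your reading of ``continuous odd numbers'' in part (3) as a typo for consecutive even numbers agrees with the paper's own proof, which works with $(2k,2k+2,2k+4)$.
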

\begin{proof}
(1) If $d_1\leq 2$, then $(d_1, d_1+1, d_1+2)\in \mdeg(\Tame(\mathbb{C}^3))$ by Lemma \ref{LemOnIn}. If $d_1\geq3$ but $d_1\neq4$, then by Theorem \ref{main} $(d_1, d_1+1, d_1+2)\notin \mdeg(\Tame(\mathbb{C}^3))$.  The only case left is $(4,5,6)$, which is proved by Kara\'{s} \cite{K5} that $(4,5,6)\notin \mdeg(\Tame(\mathbb{C}^3))$.

(2) It follows from Lemma \ref{LemOnIn} and Theorem \ref{main} that a sequence of continuous odd numbers $(d_1,d_2,d_3)\in \mdeg(\Tame(\mathbb{C}^3))$ if and only if $d_1=1$.

(3) It follows from Lemma \ref{LemOnIn} that $(2k, 2k+2, 2k+4)\in\mdeg(\Tame(\mathbb{C}^3))$ when $k\leq 2$. If $k>2$ but $k\neq4$, then by Theorem \ref{main} $(2k, 2k+2, 2k+4)\notin \mdeg(\Tame(\mathbb{C}^3))$.
The only unknown case left is $(8,10,12)$.
\end{proof}

\bibliographystyle{amsplain}

\end{document}